\documentclass[11pt]{article}

\usepackage[T1]{fontenc}
\usepackage{lmodern}
\usepackage[margin=1.12in]{geometry}
\usepackage{amsmath,amssymb,amsthm,mathtools}
\usepackage{microtype}
\usepackage[hidelinks]{hyperref}
\hypersetup{
  pdftitle={Small Counterexamples to the Gaussian Moments Conjecture},
  pdfauthor={Christopher D. Long},
  pdfsubject={Explicit counterexamples to the Gaussian Moments Conjecture},
  pdfkeywords={Gaussian Moments Conjecture, Jacobian Conjecture, Gaussian moments, Lagrange inversion}
}
\newcommand{\C}{\mathbb C}
\newcommand{\E}{\mathbb E}
\newcommand{\GMC}{\mathrm{GMC}}
\newcommand{\SIC}{\mathrm{SIC}}

\newtheorem{theorem}{Theorem}[section]
\newtheorem{proposition}[theorem]{Proposition}
\newtheorem{corollary}[theorem]{Corollary}
\theoremstyle{remark}
\newtheorem{remark}[theorem]{Remark}

\title{Small Counterexamples to the Gaussian Moments Conjecture}
\author{Christopher D. Long\\\texttt{galizur@gmail.com}}
\date{July 20, 2026}

\begin{document}
\maketitle

\begin{abstract}
We give explicit complex polynomials $P,Q$ in three independent standard real Gaussian variables such that
\[
  \E(P^m)=0,\qquad \E(QP^m)=m!\neq0
\]
for every $m\geq1$.  In natural complex linear coordinates, $P$ has five terms and total degree $4$.  Hence the Gaussian Moments Conjecture is false in every dimension $n\geq3$.  We also give a six-term cubic example in four variables, which was found first and already proves failure for every $n\geq4$.  Both examples follow from the same coefficient identity.  The search was prompted by Levent Alp\"oge's public announcement of an explicit three-dimensional counterexample to the Jacobian Conjecture.  Although the main theorem of Derksen, van den Essen, and Zhao is stated globally in dimension, its proof has fixed-dimensional content: a noninvertible cubic-homogeneous Keller map in $r$ variables forces the failure of $\GMC(2r)$.  Tracking a standard Bass--Connell--Wright reduction of the announced map gives a conservative cubic-homogeneous counterexample in $79$ variables, and hence a route-based failure of $\GMC(158)$.  That route is nonconstructive at the final Gaussian step and does not furnish explicit polynomials $P,Q$.  The much smaller explicit failures in dimensions $4$ and $3$ below were not derived from the announced Jacobian map.
\end{abstract}

\medskip
\noindent\textbf{2020 Mathematics Subject Classification.} Primary 60E05; Secondary 14R15, 05A15.

\noindent\textbf{Keywords.} Gaussian Moments Conjecture, Jacobian Conjecture, Gaussian moments, Lagrange inversion.

\section{Introduction}

Let $X=(X_1,\dots,X_n)$ be a vector of independent standard real Gaussian random variables.  Derksen, van den Essen, and Zhao formulated the Gaussian Moments Conjecture $\GMC(n)$ as follows: if $P\in\C[x_1,\dots,x_n]$ satisfies
\[
  \E\bigl(P(X)^m\bigr)=0\qquad(m\geq1),
\]
then, for every $Q\in\C[x_1,\dots,x_n]$,
\[
  \E\bigl(Q(X)P(X)^m\bigr)=0
\]
for all sufficiently large $m$ \cite[Conjecture~1.1]{DVEZ}.  Complex coefficients are essential to the formulation: for real coefficients, the condition at $m=2$ already forces $P=0$.

The logical relation with the Jacobian Conjecture requires some care.  Derksen, van den Essen, and Zhao proved \cite[Theorem~1.6]{DVEZ} that
\[
  \bigl(\forall n\geq1,\ \GMC(n)\bigr)
  \quad\Longrightarrow\quad
  \mathrm{JC},
\]
where $\mathrm{JC}$ denotes the Jacobian Conjecture in all dimensions.  The latter conjecture was proposed by Keller \cite{Keller}.  The contrapositive of this displayed theorem, taken in isolation, gives only
\begin{equation}\label{eq:global-logic}
  \neg\mathrm{JC}
  \quad\Longrightarrow\quad
  \exists n\geq2\text{ such that }\neg\GMC(n),
\end{equation}
where the restriction $n\geq2$ uses the known truth of $\GMC(1)$ \cite[Proposition~4.2]{DVEZ}.

\medskip
\noindent\textbf{Fixed-dimensional content of the proof.}
The proof of the theorem is stronger than its global formulation.  Let $\SIC(r)$ denote Zhao's Special Image Conjecture in $r$ variables.  Propositions~3.2 and~3.1 of \cite{DVEZ} give, for each fixed $r$,
\[
  \GMC(2r)\quad\Longrightarrow\quad\SIC(r).
\]
For a fixed cubic-homogeneous Keller map $F=z-H$ in $r$ variables, the proof of \cite[Theorem~3.7]{Zhao} shows that $\SIC(r)$ implies that $F$ is invertible.  Consequently,
\begin{equation}\label{eq:fixed-logic}
  \left.
  \begin{array}{c}
  F=z-H:\C^r\longrightarrow\C^r,\\
  H\text{ cubic homogeneous},\quad\det JF=1,\\
  F\text{ not invertible}
  \end{array}
  \right\}
  \quad\Longrightarrow\quad
  \neg\GMC(2r).
\end{equation}
Here $JH$ denotes the Jacobian matrix.  This implication identifies a Gaussian dimension after a cubic-homogeneous reduction, but it need not produce explicit Gaussian witness polynomials: Proposition~3.1 of \cite{DVEZ} uses a countable-union argument over Zariski-closed sets.

On July 20, 2026, Levent Alp\"oge publicly announced an explicit three-dimensional counterexample \cite{Alpoge}.  Kevin Buzzard's contemporaneous account identifies Akhil as Akhil Mathew, reports that Mathew suggested the problem to Alp\"oge, and states that the announcement credited Fable with finding the counterexample \cite{Buzzard}.  Buzzard also records that Paul Lezeau manually formalized the example in Lean; Lezeau submitted the formalization to Google DeepMind's \emph{Formal Conjectures} repository \cite{Lezeau}.  The input dimension $3$ does not itself transfer to the index in $\GMC(n)$, but the standard homogeneous reduction can be tracked.  Section~\ref{sec:jc-bound} gives the conservative bound $\neg\GMC(158)$ from this route.  The explicit examples presented later are substantially smaller and were not obtained from the announced map.

The first construction gives a cubic counterexample in four real Gaussian variables, proving $\GMC(n)$ false for every $n\geq4$.  The second replaces one complex Gaussian pair by one real Gaussian variable and yields a quartic counterexample in three variables.  Thus the final conclusion is
\[
  \boxed{\GMC(n)\text{ is false for every }n\geq3.}
\]

\section{A tracked bound from the announced Jacobian map}\label{sec:jc-bound}

We record a quantitative consequence because it distinguishes the global statement of \cite[Theorem~1.6]{DVEZ} from the fixed-dimensional content of its proof.  In the normalization used in Lezeau's formalization \cite{Lezeau}, the announced map is
\[
\begin{aligned}
 F_1&=(1+2xy)^3z+4y^2(1+2xy)(2+3xy),\\
 F_2&=y+3x(1+2xy)^2z+12xy^2(2+3xy),\\
 F_3&=-x+3x^2y+x^3z.
\end{aligned}
\]
It has Jacobian determinant $1$ and identifies the distinct points
\[
  \left(1,-\frac34,\frac{13}{4}\right),\qquad
  \left(-1,\frac34,\frac{13}{4}\right).
\]
Composing on the target with the linear automorphism $(u,v,w)\mapsto(-w,v,u)$ gives a normalized map $G$ with linear part the identity:
\[
\begin{aligned}
G_1={}&x-3x^2y-x^3z,\\
G_2={}&y+3xz+24xy^2+12x^2yz+36x^2y^3+12x^3y^2z,\\
G_3={}&z+8y^2+6xyz+28xy^3+12x^2y^2z+24x^2y^4+8x^3y^3z.
\end{aligned}
\]
Among its terms of degree at least $4$, there are respectively $3,2,2,1$ monomials of degrees $4,5,6,7$.

\begin{proposition}\label{prop:route-bound}
The announced three-variable Jacobian counterexample, together with the standard degree and homogeneous reductions, implies $\neg\GMC(158)$.
\end{proposition}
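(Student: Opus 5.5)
The plan is to apply implication~\eqref{eq:fixed-logic} with $r=79$, so that $2r=158$. To do this I need a cubic-homogeneous Keller map $z-H$ in $79$ variables that is not invertible. I start from the normalized map $G$ above, not from $F$. The reason is that $G$ already has the shape $x+(\text{higher order terms})$ with $\det JG=1$. It is also non-injective, since $G$ differs from $F$ only by a linear automorphism on the target, so the two points identified by $F$ are identified by $G$.

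\textbf{Degree reduction.} I would run the standard Bass--Connell--Wright/Drużkowski-type degree reduction on $G$ to reach a map of the form $x+H_2+H_3$ with $\deg\le 3$. The mechanism is to lower the degree of a monomial of degree $d\ge4$ by introducing a new variable $t$ that stands for a quadratic factor $m$ of the monomial. Three changes are made at once:
\begin{itemize}
\item $t\cdot(\text{cofactor})$ replaces the monomial;
\item the new coordinate $t-m$ is appended;
\item the map is composed on the source and target with elementary (triangular) automorphisms, so that the Jacobian determinant stays $1$ and the linear part stays the identity.
\end{itemize}
Each step is a stabilization followed by composition with automorphisms. Therefore injectivity fails for the new map exactly when it fails for the old one: the two identified points lift to two identified points by setting $t=m(\text{point})$. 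I would process the monomials degree by degree, using the tally recorded above: $3,2,2,1$ monomials of degrees $4,5,6,7$. For each step I would record how many new variables it costs, which is roughly $d-3$ for a monomial of degree $d$, plus whatever auxiliary coordinates the elementary compositions generate. I would choose a fixed, deliberately generous factorization scheme rather than optimizing, since the proposition asks only for a conservative bound.

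\textbf{Homogenization.} Next comes the standard homogeneous reduction from $x+H_2+H_3$ to a map $z-\tilde H$ with $\tilde H$ cubic homogeneous. This step introduces one homogenizing variable together with the additional variables needed to absorb the quadratic part, after which $J\tilde H$ is nilpotent. Again I must check two things: that $\det=1$ is preserved, and that non-invertibility survives. For the latter, I would exhibit explicit lifts of the two distinct points with equal images, taking the homogenizing coordinate equal to $1$.

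\textbf{Main obstacle.} The hard part is the bookkeeping. I must fix the precise form of each reduction step (the version in van den Essen's book is the one I would follow), count the variables introduced at each stage, and confirm that the total is at most $79$; padding with dummy coordinates is always allowed, so any count $\le79$ suffices. The tally of high-degree monomials by degree is exactly the input this count needs. I would tabulate the contribution of each monomial, add the three original variables, and add the homogenization overhead. Once the count of $79$ is secured, implication~\eqref{eq:fixed-logic} yields $\neg\GMC(158)$ directly.
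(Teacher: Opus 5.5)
\textbf{There is a genuine gap.} Your route is the right one: degree reduction of $G$, then the cubic-homogeneous reduction, then \eqref{eq:fixed-logic}. But you fix $r=79$ at the outset and defer ``confirm that the total is at most $79$'' to unspecified bookkeeping. That count is the entire content of the proposition, and the proposal never carries it out.

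\textbf{Your reduction step is not valid.} If $mc$ is a monomial of $F_i$, you replace it by $tc$ and append $t-m$. In the coordinates $(x,s)$ with $s=t-m(x)$, the resulting map is $(x,s)\mapsto(F(x)+s\,c(x)e_i,\,s)$. Its Jacobian determinant is
\[
  \det\bigl(JF+s\,e_i\nabla c\bigr)=1+s\,\nabla c\,(JF)^{-1}e_i ,
\]
which is not identically $1$ in general. For example, take $F=(x+(y-x)^4,\,y+(y-x)^4)$ and the monomial $x^4=x^2\cdot x^2$ of $F_1$. The determinant is then $1+2sx\bigl(1+4(y-x)^3\bigr)$. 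Composing with polynomial automorphisms only rescales this determinant by a constant, so the Keller property cannot be restored. The underlying obstruction is that $c(x)$ is not a polynomial in the target coordinates. Consequently your estimate of about $d-3$ new variables per monomial has no justification.

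\textbf{What the count actually requires.} The standard step uses two new variables. For a monomial $aPQ$ in $F_i$ with $2\le p,q\le d-2$:
\begin{enumerate}
\item Adjoin variables $y,z$.
\item Precompose the stabilization with $(x,y,z)\mapsto(x,\,y+P,\,z+Q)$.
\item Postcompose with $u_i\mapsto u_i-a\,vw$, where $v,w$ are the two new target coordinates.
\end{enumerate}
This replaces $aPQ$ by $-a(yz+yQ+zP)$ and appends $y+P$ and $z+Q$. The new terms have degrees at most $p+1,\,q+1,\,p,\,q,\,2$. Hence
\[
  c(d)\le 1+c(p+1)+c(q+1)+c(p)+c(q).
\]
Balanced factorizations give $c(4)=1$, $c(5)\le2$, $c(6)\le3$, $c(7)\le5$. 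The $3,2,2,1$ monomials of degrees $4,5,6,7$ therefore cost at most $18$ steps, giving $s=3+2\cdot18=39$ variables.

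\textbf{Homogenization also needs to be quantified.} Homogenizing $K=X+K^{(2)}+K^{(3)}$ directly does not in general preserve the Keller condition, because $\det JK=1$ does not force $J(K-X)$ to be nilpotent. One first passes to
\[
  U(X,Y)=\bigl(X+K^{(2)}(X)+Y,\;Y-K^{(3)}(X)\bigr)
\]
in $2s$ variables. This map has the form $(X,Y)+N$ with $JN$ nilpotent, which follows from the family $E_t(X)=X+tK^{(2)}(X)+t^2K^{(3)}(X)$. Adding one homogenizing variable then gives $r=2s+1=79$, and \eqref{eq:fixed-logic} yields $\neg\GMC(158)$.
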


\begin{proof}
We track the elementary reduction in \cite[Section~3]{BCW}.  Let $c(d)$ be an upper bound for the number of degree-lowering steps required for one monomial of degree $d$, with $c(d)=0$ for $d\leq3$.  If a degree-$d$ monomial is factored as $PQ$, where $\deg P=p$, $\deg Q=q$, $p+q=d$, and $p,q\leq d-2$, one Bass--Connell--Wright step adds two variables and replaces it by terms of degrees at most
\[
  p+1,\quad q+1,\quad p,\quad q,\quad 2.
\]
Thus
\[
  c(d)\leq1+c(p+1)+c(q+1)+c(p)+c(q).
\]
Using the balanced choices $4=2+2$, $5=2+3$, $6=3+3$, and $7=3+4$ gives
\[
  c(4)=1,\qquad c(5)\leq2,\qquad c(6)\leq3,
  \qquad c(7)\leq5.
\]
The displayed support of $G$ therefore requires at most
\[
  3c(4)+2c(5)+2c(6)+c(7)\leq18
\]
steps.  Since each step adds two variables, this produces a noninvertible Keller map of degree at most $3$ with identity linear part in
\[
  s=3+2\cdot18=39
\]
variables.

For completeness, the next dimension count can be read directly from the construction in \cite[Section~4]{BCW}; its optional multilinearization is not needed here.  Write the resulting map as
\[
  K(X)=X+K^{(2)}(X)+K^{(3)}(X),\qquad X\in\C^s.
\]
In variables $(X,Y)\in\C^{2s}$, the map
\[
  U(X,Y)=\bigl(X+K^{(2)}(X)+Y,\;Y-K^{(3)}(X)\bigr)
\]
is stably equivalent to $K$, has the form $(X,Y)+N(X,Y)$ with $\deg N\leq3$, and has nilpotent $JN$.  One direct verification uses
\[
 E_t(X)=X+tK^{(2)}(X)+t^2K^{(3)}(X)
\]
and observes that $(X,Y)+tN$ is obtained from the stabilization of $E_t$ by elementary pre- and post-compositions.  Hence $I+tJN$ is invertible over the polynomial ring in $t$, which forces $JN$ to be nilpotent.  Finally, the Bass--Connell--Wright homogenization adds one variable and replaces the degree-$1$, $2$, and $3$ parts of $N$ by
\[
  N^{(1)}T^2+N^{(2)}T+N^{(3)}.
\]
This gives a noninvertible map equal to the identity plus a cubic-homogeneous map in
\[
  r=2s+1=79
\]
variables.  Applying \eqref{eq:fixed-logic} gives $\neg\GMC(2r)=\neg\GMC(158)$.
\end{proof}

\begin{remark}
The number $158$ is a transparent upper bound, not an optimization and not a claim about the least failing Gaussian dimension.  The reduction depends on the degree and monomial support of the particular Jacobian map; it does not give a bound depending only on the original input dimension $3$.  It also does not supply explicit Gaussian witness polynomials $P,Q$.
\end{remark}

\section{Gaussian contractions}

Let $X,Y$ be independent standard real Gaussians and put
\[
  Z=\frac{X+iY}{\sqrt2},\qquad W=\frac{X-iY}{\sqrt2}.
\]
Then $W=\overline Z$ as random variables, while both are complex linear polynomials in $X,Y$.  Rotational invariance gives
\begin{equation}\label{eq:contraction}
  \E(W^aZ^b)=\delta_{ab}\,a!\qquad(a,b\geq0).
\end{equation}
Equivalently, for every polynomial $R\in\C[z]$,
\begin{equation}\label{eq:coefficient-contraction}
  \E\bigl(W^aR(Z)\bigr)=a!\,[z^a]R(z).
\end{equation}
If $T$ is an independent standard real Gaussian, then
\begin{equation}\label{eq:real-gaussian}
  \E(T^{2k})=(2k-1)!!=\frac{(2k)!}{2^k k!},
\end{equation}
and hence, as a formal ordinary generating function,
\begin{equation}\label{eq:central-binomial}
  \sum_{k\geq0}\frac{(-1)^k}{4^k}\binom{2k}{k}u^k=(1+u)^{-1/2}.
\end{equation}
All exponential generating functions below are formal identities; no analytic integrability of $e^{tP}$ is asserted or needed.

\section{A cubic counterexample in four variables}

Let $X_1,Y_1,X_2,Y_2$ be independent standard real Gaussians and define
\[
  Z_j=\frac{X_j+iY_j}{\sqrt2},\qquad
  W_j=\frac{X_j-iY_j}{\sqrt2}\qquad(j=1,2).
\]
Set
\begin{equation}\label{eq:P4}
  P_4=(1+Z_2)\bigl(W_1(1-Z_1)+W_2\bigr),
  \qquad Q_4=Z_2.
\end{equation}
In these coordinates $P_4$ has six terms and total degree $3$.

\begin{proposition}\label{prop:four}
For every $A\in\C[z]$ and $m\geq1$,
\begin{equation}\label{eq:master-four}
  \E\bigl(A(Z_2)P_4^m\bigr)
  =m!\,[z^m]A(z)(1+z)^{m-1}.
\end{equation}
In particular,
\[
  \E(P_4^m)=0,
  \qquad
  \E(Q_4P_4^m)=m!
  \qquad(m\geq1).
\]
\end{proposition}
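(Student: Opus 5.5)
The plan is to expand $P_4^m$ binomially in the inner factor and to integrate the two independent complex Gaussian pairs separately, using the contraction rule \eqref{eq:coefficient-contraction} for each pair.

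Writing $P_4=(1+Z_2)\bigl(W_1(1-Z_1)+W_2\bigr)$, I get
\[
  A(Z_2)P_4^m=\sum_{k=0}^{m}\binom mk\,\bigl[W_1^k(1-Z_1)^k\bigr]\cdot\bigl[W_2^{m-k}A(Z_2)(1+Z_2)^m\bigr].
\]
The first bracket depends only on $(X_1,Y_1)$ and the second only on $(X_2,Y_2)$. By independence, the expectation of each summand therefore factors as a product of two expectations.

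For the first pair, \eqref{eq:coefficient-contraction} with $R(z)=(1-z)^k$ gives
\[
  \E\bigl(W_1^k(1-Z_1)^k\bigr)=k!\,[z^k](1-z)^k=(-1)^k k!.
\]
For the second pair, the same identity with $R(z)=A(z)(1+z)^m$ gives
\[
  \E\bigl(W_2^{m-k}A(Z_2)(1+Z_2)^m\bigr)=(m-k)!\,[z^{m-k}]A(z)(1+z)^m.
\]
Multiplying, the factorials combine with $\binom mk$ to leave $m!$. Hence
\[
  \E\bigl(A(Z_2)P_4^m\bigr)=m!\sum_{k=0}^m(-1)^k[z^{m-k}]A(z)(1+z)^m .
\]

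The only step needing care is recognizing this signed sum as a single coefficient. Since $\sum_{k=0}^m(-1)^k[z^{m-k}]F=[z^m]\bigl(F(z)\sum_{k\ge0}(-z)^k\bigr)$, and terms with $k>m$ contribute nothing to the coefficient of $z^m$, the sum equals
\[
  [z^m]\frac{A(z)(1+z)^m}{1+z}=[z^m]A(z)(1+z)^{m-1}.
\]
This is \eqref{eq:master-four}. The division is legitimate as an identity of formal power series, and the result is in fact a polynomial identity because $m\ge1$.

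For the particular cases, take $A=1$: then $[z^m](1+z)^{m-1}=0$, since the polynomial has degree $m-1$, so $\E(P_4^m)=0$. Take $A(z)=z$, which gives $Q_4=Z_2$: then $[z^m]\,z(1+z)^{m-1}=[z^{m-1}](1+z)^{m-1}=1$, so $\E(Q_4P_4^m)=m!$.

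I do not expect a real obstacle. The main points to watch are the independence factorization and keeping the sign conventions in \eqref{eq:coefficient-contraction} straight.
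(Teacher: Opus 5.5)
Your proposal is correct and follows the paper's proof essentially step for step. It uses the same binomial expansion by the number of factors taken from $W_1(1-Z_1)$, the same two applications of the contraction identity, the same resummation of the signed coefficient sum via $\sum_{k\ge0}(-z)^k=(1+z)^{-1}$, and the same specializations $A=1$ and $A=z$.
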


\begin{proof}
Expanding according to the number $a$ of factors chosen from $W_1(1-Z_1)$ gives
\begin{align*}
\E\bigl(A(Z_2)P_4^m\bigr)
 &=\sum_{a=0}^m\binom ma
   \E\bigl(W_1^a(1-Z_1)^a\bigr)
   \E\bigl(W_2^{m-a}A(Z_2)(1+Z_2)^m\bigr)\\
 &=m!\sum_{a=0}^m(-1)^a
   [z^{m-a}]A(z)(1+z)^m,
\end{align*}
where \eqref{eq:coefficient-contraction} was used twice.  Therefore
\begin{align*}
\frac1{m!}\E\bigl(A(Z_2)P_4^m\bigr)
 &= [z^m]A(z)(1+z)^m\sum_{a\geq0}(-z)^a\\
 &= [z^m]A(z)(1+z)^{m-1}.
\end{align*}
Terms with $a>m$ cannot affect the coefficient of $z^m$.  Taking $A=1$ and $A=z$ gives the stated moments.
\end{proof}

Equivalently,
\begin{equation}\label{eq:egf-four}
  \E(e^{tP_4})=1,
  \qquad
  \E(Q_4e^{tP_4})=\frac{t}{1-t}.
\end{equation}

\section{A quartic counterexample in three variables}

Let $X_1,X_2,T$ be independent standard real Gaussians and put
\[
  Z=\frac{X_1+iX_2}{\sqrt2},\qquad
  W=\frac{X_1-iX_2}{\sqrt2}.
\]
Define
\begin{equation}\label{eq:P3}
\begin{split}
  P_3
  &=(1+Z)\left(W-\frac12(2+Z)T^2\right)\\
  &=W+WZ-T^2-\frac32ZT^2-\frac12Z^2T^2,
  \qquad Q_3=Z.
\end{split}
\end{equation}
Thus $P_3$ has five terms and total degree $4$.

\begin{theorem}\label{thm:three}
For every $A\in\C[z]$ and $m\geq1$,
\begin{equation}\label{eq:master-three}
  \E\bigl(A(Z)P_3^m\bigr)
  =m!\,[z^m]A(z)(1+z)^{m-1}.
\end{equation}
Consequently,
\begin{equation}\label{eq:counterexample-three}
  \E(P_3^m)=0,
  \qquad
  \E(Q_3P_3^m)=m!\neq0
  \qquad(m\geq1).
\end{equation}
\end{theorem}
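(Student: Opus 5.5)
The plan is to copy the proof of Proposition~\ref{prop:four}, with the real Gaussian $T$ replacing the second complex pair. The term $-\tfrac12(2+Z)T^2$ should act as a geometric-series kernel in $z$, producing the factor $(1+z)^{-1}$ that turns $(1+z)^m$ into $(1+z)^{m-1}$. The key observation is that $z(2+z)=(1+z)^2-1$. This makes the central-binomial series \eqref{eq:central-binomial} collapse to $(1+z)^{-1}$.

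First, write $P_3^m=(1+Z)^m\bigl(W-\tfrac12(2+Z)T^2\bigr)^m$ and expand binomially according to the number $a$ of factors taken from $-\tfrac12(2+Z)T^2$. Since $T$ is independent of $(Z,W)$, each term factors:
\[
\E\bigl(A(Z)P_3^m\bigr)=\sum_{a=0}^m\binom ma\Bigl(-\frac12\Bigr)^a\E(T^{2a})\,\E\bigl(W^{m-a}A(Z)(1+Z)^m(2+Z)^a\bigr).
\]
Apply \eqref{eq:real-gaussian} to the $T$-moment and \eqref{eq:coefficient-contraction} to the $(Z,W)$-moment. The second factor becomes $(m-a)!\,[z^{m-a}]A(z)(1+z)^m(2+z)^a$. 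Combining the constants,
\[
\binom ma (m-a)!\,\frac{(2a)!}{2^a a!}\Bigl(-\frac12\Bigr)^a=m!\,\frac{(-1)^a}{4^a}\binom{2a}{a}.
\]
Shifting the coefficient index, $[z^{m-a}]R=[z^m]z^aR$, we obtain
\[
\frac1{m!}\E\bigl(A(Z)P_3^m\bigr)=[z^m]A(z)(1+z)^m\sum_{a\ge0}\frac{(-1)^a}{4^a}\binom{2a}a\bigl(z(2+z)\bigr)^a.
\]
Extending the sum to all $a\ge0$ is harmless, because $z^a$ kills every term with $a>m$.

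The one point needing care is the substitution $u=z(2+z)$ into \eqref{eq:central-binomial}. It is legitimate formally because $u$ has zero constant term, so the composite series is well defined in $\C[[z]]$. The result is $(1+u)^{-1/2}=\bigl((1+z)^2\bigr)^{-1/2}$. This equals $(1+z)^{-1}$, since both are the unique power series with constant term $1$ whose square is $(1+z)^{-2}$. That gives \eqref{eq:master-three}.

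Finally, take $A=1$. Then $[z^m](1+z)^{m-1}=0$ for $m\ge1$, since that polynomial has degree $m-1$. Next take $A=z$. Then $[z^{m-1}](1+z)^{m-1}=1$, so $\E(Q_3P_3^m)=m!$. Together these give \eqref{eq:counterexample-three}.
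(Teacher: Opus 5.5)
Your proof is correct and follows the paper's argument essentially step for step: you expand by the number of $T^2$ factors, combine the constants into $\frac{(-1)^a}{4^a}\binom{2a}{a}$, shift $[z^{m-a}]$ to $[z^m]z^a$, and collapse the central-binomial series using $1+z(2+z)=(1+z)^2$. Your justification of the formal substitution $u=z(2+z)$ and of the square-root branch adds a little care to points the paper only states briefly.
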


\begin{proof}
Write
\[
  h(z)=1+z,
  \qquad
  v(z)=-\frac12(1+z)(2+z),
\]
so that $P_3=Wh(Z)+v(Z)T^2$.  Expanding according to the number $k$ of factors chosen from $v(Z)T^2$, and using \eqref{eq:coefficient-contraction} and \eqref{eq:real-gaussian}, gives
\begin{align*}
\frac1{m!}\E\bigl(A(Z)P_3^m\bigr)
 &=\sum_{k=0}^m
   \frac{(-1)^k}{4^k}\binom{2k}{k}
   [z^{m-k}]A(z)(1+z)^m(2+z)^k\\
 &= [z^m]A(z)(1+z)^m
    \sum_{k\geq0}\frac{(-1)^k}{4^k}\binom{2k}{k}
      \bigl(z(2+z)\bigr)^k.
\end{align*}
Extending the sum beyond $k=m$ does not change the coefficient of $z^m$.  By \eqref{eq:central-binomial} and
\[
  1+z(2+z)=(1+z)^2,
\]
the last expression is
\[
  [z^m]A(z)(1+z)^m\bigl((1+z)^2\bigr)^{-1/2}
  =[z^m]A(z)(1+z)^{m-1},
\]
where the square root is the formal branch with constant term $1$.  Taking $A=1$ and $A=z$ proves \eqref{eq:counterexample-three}.
\end{proof}

Thus
\begin{equation}\label{eq:egf-three}
  \E(e^{tP_3})=1,
  \qquad
  \E(Q_3e^{tP_3})=\frac{t}{1-t}.
\end{equation}
The failure of eventual vanishing is maximal: the mixed moment is nonzero for every positive exponent.

\begin{corollary}\label{cor:dimensions}
The Gaussian Moments Conjecture $\GMC(n)$ is false for every $n\geq3$.
\end{corollary}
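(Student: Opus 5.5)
The plan is to reduce everything to Theorem~\ref{thm:three} and then pass from dimension $3$ to arbitrary $n\geq3$ by padding with unused variables. Write $P_3$ and $Q_3$ as complex polynomials in the real coordinates $(x_1,x_2,t)$, using $Z=(X_1+iX_2)/\sqrt2$ and $W=(X_1-iX_2)/\sqrt2$. Theorem~\ref{thm:three} gives $\E(P_3^m)=0$ for every $m\geq1$ and $\E(Q_3P_3^m)=m!\neq0$ for every $m\geq1$. The first identity is exactly the hypothesis of $\GMC(3)$. The second shows that $\E(Q_3P_3^m)$ is nonzero for all $m$, so it is not eventually zero, and the conclusion of $\GMC(3)$ fails. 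Hence $\GMC(3)$ is false.

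For $n>3$, view $P_3,Q_3\in\C[x_1,x_2,t]\subset\C[x_1,\dots,x_n]$ as polynomials that do not involve the extra variables $x_4,\dots,x_n$. The joint law of the first three coordinates of a standard Gaussian vector in $\mathbb R^n$ is the standard Gaussian law on $\mathbb R^3$, so every expectation $\E(P_3^m)$ and $\E(Q_3P_3^m)$ has the same value in dimension $n$ as in dimension $3$. This is also immediate by Fubini, since these are polynomial integrals against a product measure. The same pair $(P_3,Q_3)$ is therefore a counterexample to $\GMC(n)$.

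No step is a real obstacle. The only point needing a little care is that the definition of $\GMC(n)$ quantifies over polynomials in exactly $n$ variables, so the padding remark above, independence of marginals, is what justifies the embedding. Alternatively, for $n\geq4$ one could cite Proposition~\ref{prop:four} directly, but the three-variable example already covers all $n\geq3$ uniformly.
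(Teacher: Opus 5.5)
Your proof is correct and is essentially the paper's own argument. Theorem~\ref{thm:three} refutes $\GMC(3)$ directly, and for $n>3$ the same $P_3,Q_3$ serve as a counterexample when the extra independent Gaussian coordinates are left unused, since all the relevant moments are unchanged.
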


\begin{proof}
Theorem~\ref{thm:three} disproves $\GMC(3)$.  For $n>3$, use the same polynomials in the first three variables and ignore the remaining independent Gaussian variables.
\end{proof}

Derksen, van den Essen, and Zhao proved $\GMC(1)$ \cite[Proposition~4.2]{DVEZ}.  The only remaining dimension is therefore $n=2$.

\section{How the examples were found}

The four-variable example arose from the complex-Gaussian realization in \cite[Proposition~3.2]{DVEZ}.  Let $Z_1,\dots,Z_d$ be independent standard circular complex Gaussians and put $W_j=\overline{Z_j}$.  For a polynomial map $H=(H_1,\dots,H_d)$, consider
\[
  P=W\mathbin{\cdot}H(Z)=\sum_{j=1}^d W_jH_j(Z).
\]
A form of Lagrange--Good inversion \cite{Good} gives
\begin{equation}\label{eq:good}
  \E\bigl(A(Z)e^{tP}\bigr)
  =\frac{A(\boldsymbol g(t))}{\det\bigl(I-tJH(\boldsymbol g(t))\bigr)},
  \qquad \boldsymbol g(t)=tH(\boldsymbol g(t)),
\end{equation}
where $JH$ is the Jacobian matrix and $\boldsymbol g(t)\in t\C[[t]]^d$ is the unique formal solution.
The search therefore became: make $\boldsymbol g(t)$ nonpolynomial, but force the determinant in \eqref{eq:good} to equal $1$ along this inverse branch.  The choice
\[
  H_1(z_1,z_2)=(1-z_1)(1+z_2),
  \qquad H_2(z_1,z_2)=1+z_2
\]
gives
\[
  \boldsymbol g(t)=\left(t,\frac{t}{1-t}\right),
  \qquad
  \det\bigl(I-tJH(g(t))\bigr)=1.
\]
This is precisely $P_4$, and $A(z_1,z_2)=z_2$ reads off $t/(1-t)$.

The three-variable example replaces one circular complex Gaussian pair by one real Gaussian direction.  For
\[
  P=Wh(Z)+v(Z)T^2,
\]
univariate Lagrange inversion and the identity $\E(e^{sT^2})=(1-2s)^{-1/2}$ give
\begin{equation}\label{eq:half-pair}
  \E\bigl(A(Z)e^{tP}\bigr)
  =\frac{A(\zeta(t))\bigl(1-2t v(\zeta(t))\bigr)^{-1/2}}
         {1-th'(\zeta(t))},
  \qquad \zeta(t)=t h(\zeta(t)).
\end{equation}
Taking $h(z)=1+z$ gives $\zeta=t/(1-t)$ and denominator $1-t$.  The choice
\[
  v(z)=-\frac12(1+z)(2+z)
\]
forces
\[
  1-2t v(\zeta(t))=(1-t)^{-2},
\]
so the square-root factor is $1-t$ and cancels the denominator.  This yields $P_3$.  Informally, a single real Gaussian supplies half of the determinant correction supplied by a circular complex pair.

The announced Jacobian counterexample supplied the initial motivation.  As Proposition~\ref{prop:route-bound} records, tracking the known reduction gives the fixed-dimensional but nonexplicit conclusion $\neg\GMC(158)$.  The much stronger conclusions $\neg\GMC(4)$ and $\neg\GMC(3)$ come instead from the explicit polynomials above.  No coordinate, term, or algebraic feature of the announced map was used in either construction.

\section{A brief remark on dimension two}

Let $Z=(X+iY)/\sqrt2$, $W=(X-iY)/\sqrt2$, and $U=ZW$.  Give $Z$ weight $1$ and $W$ weight $-1$.  Gaussian expectation kills all nonzero weights, while on $\C[U]$ it is the linear factorial functional $\mathcal L:\C[U]\to\C$ defined by $\mathcal L(U^j)=j!$.

This immediately excludes several natural sources of a two-dimensional counterexample.  Polynomials supported only on positive weights or only on negative weights satisfy the desired eventual vanishing by weight.  Homogeneous polynomials are covered by \cite[Corollary~4.4]{DVEZ}, using the theorem of Duistermaat and van der Kallen \cite{DvK}.  On $\C[U]$, the hypothesis $\mathcal L(P^m)=0$ for all $m$ forces $P=0$ by the one-variable Factorial Conjecture \cite[Theorem~4.9]{VEWZ}.  More generally, if
\[
  P=ZA(U)+WB(U),
\]
then odd moments vanish by weight and
\[
  \E(P^{2r})=\binom{2r}{r}\mathcal L\bigl((UA(U)B(U))^r\bigr).
\]
The same theorem forces $A=0$ or $B=0$, reducing again to one-sided weights.  Thus any counterexample to $\GMC(2)$ would require a richer inhomogeneous mixture of positive and negative weights.  We do not settle this case.

\section*{AI provenance and author responsibility}

The four-variable construction was produced by ChatGPT 5.6 Sol Pro, without human intervention after the initial prompt, after it was told that the Jacobian Conjecture had been disproved and asked whether a small counterexample to the Gaussian Moments Conjecture might therefore exist.  After being shown the four-variable example, Claude Fable 5 found the three-variable construction and supplied independent algebraic checks and structural comments.  ChatGPT also assisted with editing this draft.  The human author bears full responsibility for the mathematics, the exposition, and every claim in this manuscript.

\end{document}